\begin{document}
\baselineskip = 16pt

\newcommand \ZZ {{\mathbb Z}}
\newcommand \NN {{\mathbb N}}
\newcommand \QQ {{\mathbb Q}}
\newcommand \RR {{\mathbb R}}
\newcommand \CC {{\mathbb C}}
\newcommand \PR {{\mathbb P}}
\newcommand \AF {{\mathbb A}}
\newcommand \GG {{\mathbb G}}
\newcommand  \bbD{{\mathcal D}}
\newcommand \bcA {{\mathscr A}}
\newcommand \bcC {{\mathscr C}}
\newcommand \bcD {{\mathscr D}}
\newcommand \bcF {{\mathscr F}}
\newcommand \bcG {{\mathscr G}}
\newcommand \bcH {{\mathscr H}}
\newcommand \bcM {{\mathscr M}}
\newcommand \bcJ {{\mathscr J}}
\newcommand \bcL {{\mathscr L}}
\newcommand \bcO {{\mathscr O}}
\newcommand \bcP {{\mathscr P}}
\newcommand \bcQ {{\mathscr Q}}
\newcommand \bcR {{\mathscr R}}
\newcommand \bcS {{\mathscr S}}
\newcommand \bcV {{\mathscr V}}
\newcommand \bcW {{\mathscr W}}
\newcommand \bcX {{\mathscr X}}
\newcommand \bcY {{\mathscr Y}}
\newcommand \bcZ {{\mathscr Z}}
\newcommand \goa {{\mathfrak a}}
\newcommand \gob {{\mathfrak b}}
\newcommand \goc {{\mathfrak c}}
\newcommand \gom {{\mathfrak m}}
\newcommand \gon {{\mathfrak n}}
\newcommand \gop {{\mathfrak p}}
\newcommand \goq {{\mathfrak q}}
\newcommand \goQ {{\mathfrak Q}}
\newcommand \goP {{\mathfrak P}}
\newcommand \goM {{\mathfrak M}}
\newcommand \goN {{\mathfrak N}}
\newcommand \uno {{\mathbbm 1}}
\newcommand \Le {{\mathbbm L}}
\newcommand \Spec {{\rm {Spec}}}
\newcommand \Gr {{\rm {Gr}}}
\newcommand \Pic {{\rm {Pic}}}
\newcommand \Jac {{{J}}}
\newcommand \Alb {{\rm {Alb}}}
\newcommand \Corr {{Corr}}
\newcommand \Chow {{\mathscr C}}
\newcommand \Sym {{\rm {Sym}}}
\newcommand \Prym {{\rm {Prym}}}
\newcommand \cha {{\rm {char}}}
\newcommand \eff {{\rm {eff}}}
\newcommand \tr {{\rm {tr}}}
\newcommand \Tr {{\rm {Tr}}}
\newcommand \pr {{\rm {pr}}}
\newcommand \ev {{\it {ev}}}
\newcommand \cl {{\rm {cl}}}
\newcommand \interior {{\rm {Int}}}
\newcommand \sep {{\rm {sep}}}
\newcommand \td {{\rm {tdeg}}}
\newcommand \alg {{\rm {alg}}}
\newcommand \im {{\rm im}}
\newcommand \gr {{\rm {gr}}}
\newcommand \op {{\rm op}}
\newcommand \Hom {{\rm Hom}}
\newcommand \Hilb {{\rm Hilb}}
\newcommand \Sch {{\mathscr S\! }{\it ch}}
\newcommand \cHilb {{\mathscr H\! }{\it ilb}}
\newcommand \cHom {{\mathscr H\! }{\it om}}
\newcommand \colim {{{\rm colim}\, }} 
\newcommand \End {{\rm {End}}}
\newcommand \coker {{\rm {coker}}}
\newcommand \id {{\rm {id}}}
\newcommand \van {{\rm {van}}}
\newcommand \spc {{\rm {sp}}}
\newcommand \Ob {{\rm Ob}}
\newcommand \Aut {{\rm Aut}}
\newcommand \cor {{\rm {cor}}}
\newcommand \Cor {{\it {Corr}}}
\newcommand \res {{\rm {res}}}
\newcommand \red {{\rm{red}}}
\newcommand \Gal {{\rm {Gal}}}
\newcommand \PGL {{\rm {PGL}}}
\newcommand \Bl {{\rm {Bl}}}
\newcommand \Sing {{\rm {Sing}}}
\newcommand \spn {{\rm {span}}}
\newcommand \Nm {{\rm {Nm}}}
\newcommand \inv {{\rm {inv}}}
\newcommand \codim {{\rm {codim}}}
\newcommand \Div{{\rm{Div}}}
\newcommand \sg {{\Sigma }}
\newcommand \DM {{\sf DM}}
\newcommand \Gm {{{\mathbb G}_{\rm m}}}
\newcommand \tame {\rm {tame }}
\newcommand \znak {{\natural }}
\newcommand \lra {\longrightarrow}
\newcommand \hra {\hookrightarrow}
\newcommand \rra {\rightrightarrows}
\newcommand \ord {{\rm {ord}}}
\newcommand \Rat {{\mathscr Rat}}
\newcommand \rd {{\rm {red}}}
\newcommand \bSpec {{\bf {Spec}}}
\newcommand \Proj {{\rm {Proj}}}
\newcommand \pdiv {{\rm {div}}}
\newcommand \CH {{\it {CH}}}
\newcommand \wt {\widetilde }
\newcommand \ac {\acute }
\newcommand \ch {\check }
\newcommand \ol {\overline }
\newcommand \Th {\Theta}
\newcommand \cAb {{\mathscr A\! }{\it b}}

\newenvironment{pf}{\par\noindent{\em Proof}.}{\hfill\framebox(6,6)
\par\medskip}

\newtheorem{theorem}[subsection]{Theorem}
\newtheorem{conjecture}[subsection]{Conjecture}
\newtheorem{proposition}[subsection]{Proposition}
\newtheorem{lemma}[subsection]{Lemma}
\newtheorem{remark}[subsection]{Remark}
\newtheorem{remarks}[subsection]{Remarks}
\newtheorem{definition}[subsection]{Definition}
\newtheorem{corollary}[subsection]{Corollary}
\newtheorem{example}[subsection]{Example}
\newtheorem{examples}[subsection]{examples}

\title{A note on Mumford-Roitman argument on Chow schemes}
\author{ Kalyan Banerjee}
\address{Indian Statistical Institute, Bangalore Center, Bangalore 560059}
\email{kalyanb$_{-}$vs@isibang.ac.in}

\footnotetext{Mathematics Classification Number: 14L40, 14L10}
\footnotetext{Keywords: Projective algebraic groups, R-equivalence}
\begin{abstract}
In this note we are going to understand two questions. One is the fiber of the natural map from a projective algebraic group $G$ to $G/\Gamma$, where $\Gamma$ denotes the $\Gamma$-equivalence on $G$. The other one is to define a natural map from Hilbert scheme of the generic fiber of a fibration $X\to S$ to the Chow group of relative zero cycles on $X\to S$ and to understand the fibers of this map.
\end{abstract}
\maketitle
\section{Introduction}
In the breakthrough paper \cite{M}, Mumford had sketched an outline of the fact that the fibers of the natural map from the symmetric powers of a smooth projective variety $X$ to the Chow group of $X$ are countable unions of Zariski closed subsets inside the symmetric powers of $X$. In the paper by \cite{R} Roitman has proven that the fibers are indeed countable union of Zariski closed subsets inside the symmetric powers of the smooth projective variety $X$. That is the departing point of this article. We ask the same question but for $\Gamma$-equivalence on projective algebraic groups. Here $\Gamma$ is a smooth projective curve. Two points $g,h$ on $G$ are said to be $\Gamma$-equivalent, if there exists two points $0,\infty$ on $\Gamma$, and a rational map $f$ from $\Gamma$ to $G$ such that

$$f(0)=g,\quad f(\infty)=h\;.$$

Now we consider the natural map $\theta$ from $G$ to $G/\Gamma$, where $\Gamma$ denotes the $\Gamma$-equivalence relation and ask what is the kernel of $\theta$ or the fiber of $\theta$ over $e$, the identity element of $G$. So our main theorem of this article is as follows.

\textit{Let $G$ be a projective algebraic group over an uncountable, algebraically closed ground field $k$. Let $\theta$ denote the natural map from $G$ to $G/\Gamma$. Then $\theta^{-1}([e])$ is a countable union of translates of an abelian variety $A_0$ of $G$.}

To prove that we mainly used the Roitman's technique to stratify $\theta^{-1}([e])$, in terms of quasi-projective schemes and show that the Zariski closure of each of them is again in $\theta^{-1}([e])$, obtaining that $\theta^{-1}([e])$ is a countable union of Zariski closed subsets of $G$. Then the uncountability of the ground field is coming into the picture, giving us the fact that one of these Zariski closed subsets is actually an abelian variety and we obtain the others as the translates of the abelian variety.
Over complex numbers the picture is much more interesting, because we can use the analytic structure of $G$, that it is a complex compact manifold and $\theta^{-1}([e])$ is a locally compact hausdorff topological subgroup of it. Hence it is a Baire subspace of $G$ giving us that only finite number of translates give us $\theta^{-1}([e])$. This is our final theorem.

\textit{Let $G$ be a projective algebraic group over $\CC$. Consider the natural map $\theta$ from $G$ to $G/\Gamma$. Then $\theta^{-1}([e])$ is a finite union of translates of an abelian subvariety $A_0$ of $G$. Hence $\theta^{-1}([e])$ is an algebraic subgroup of $G$.}

The next section is devoted to relative zero cycles which was first introduced by Suslin and Voevodsky in \cite{SV}. We define the Chow group of relative zero cycles and produce a natural map from the Hilbert scheme of length $d$ zero dimensional subschemes on the generic fiber of a fibration $X\to S$, ($X,S$ smooth projective) to the Chow group of relative zero cycles on $X\to S$. We prove that the fibers of this map is countable union of Zariski closed subschemes in the Hilbert scheme. Here we use the techniques coming from \cite{R} to prove this result, also a sketch of this proof was given by Mumford in \cite{M}.

{\small \textbf{Acknowledgements:} The author would like to thank the ISF-UGC grant for funding this project and is grateful to the hospitality of Indian Statistical Institute, Bangalore Center for hosting this project. The author also thanks the anonymous referee for pointing out an inaccuracy about $R$-equivalence in the earlier version of the paper. Finally the author is grateful to Vladimir Guletskii for telling the problem about generalization of the Mumford-Roitman argument for the case of relative cycles, to the author.}

\section{Preliminaries}
Let $G$ be a projective algebraic group over a ground field $k$. Two $k$-points on $G$ are said to be $\Gamma$-equivalent if there exists a chain of rational maps from $\Gamma$ connecting them. Precisely, let $a,b$ be two $k$-points on $G$. They are said to be $\Gamma$-equivalent if there exists rational morphisms $f:\Gamma\to G$ , $0,\infty$ in $\Gamma$  such that
$$f(0)=a,\quad f(\infty)=b\;.$$

 Let $\Gamma(e)$ be the  class of the identity $e$ of $G$ under the above relation, that is collection of all $k$-points of $G$, $\Gamma$-equivalent to $e$. Then $\Gamma(e)$ is a subgroup of $G$ and $G/\Gamma(e)$ is a group.

\subsection{Mumford-Roitman techniques}
Let us consider the following map $\theta:G\to G/\Gamma(e)$, define by
$$\theta(g)=[g]$$
where $[g]$ denotes the  class of $g$ in $G/\Gamma(e)$. Since the addition law in $G/\Gamma(e)$ is defined to be
$$[g]+[h]=[g+h]$$
we get that $\theta$ is a homomorphism of groups. We are interested to understand what is the kernel of $\theta$ or $\theta^{-1}([e])$. There was a similar such question asked for the natural map from the symmetric power of a fixed degree of an algebraic variety to the Chow group of zero cycles. It was sketched in Mumford's article \cite{M} and later proved by Roitman  in \cite{R}, that the fiber over zero of a such a natural map is a countable union of Zariski closed subsets of the symmetric power of the given algebraic variety. In this section we are going to adapt the techniques present in the Roitman's proof in \cite{R} to our set up to derive at the fact that $\theta^{-1}([e])$ is a countable union of translates of an algebraic variety.
\begin{proposition}
Let $\theta$ be the natural map from $G$ to $G/\Gamma(e)$ defined as above. Then $\theta^{-1}([e])$ is a countable union of translates of Zariski closed subsets of $G$.
\end{proposition}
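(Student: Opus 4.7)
The plan is to adapt Roitman's stratification argument from \cite{R} to the setting of $\Gamma$-equivalence on $G$. Since $G$ is projective, any rational map from the smooth curve $\Gamma$ to $G$ extends uniquely to a morphism, so I would parametrize direct $\Gamma$-equivalences via the Hom scheme $\Hom(\Gamma,G)=\bigsqcup_{n}H_{n}$, a countable disjoint union of quasi-projective $k$-schemes indexed by the Hilbert polynomial of the graph with respect to a fixed polarization on $G$. Any projective algebraic group is commutative, so we may write $G$ additively and consider
\[
\psi_{n}:\Gamma\times\Gamma\times H_{n}\longrightarrow G,\qquad(p,q,\phi)\longmapsto\phi(q)-\phi(p).
\]
By Chevalley's theorem, its image $V_{n}$ is a constructible subset of $G$, and by the definition of $\Gamma$-equivalence combined with the subgroup property of $\Gamma(e)$,
\[
\theta^{-1}([e])=\bigcup_{n}V_{n},
\]
exhibiting the fiber as a countable union of constructible subsets of $G$.

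The heart of the argument is then to show that each Zariski closure $\overline{V_{n}}$ still lies in $\theta^{-1}([e])$; granted this, $\theta^{-1}([e])=\bigcup_{n}\overline{V_{n}}$ is the desired countable union of Zariski closed subsets of $G$. For $g\in\overline{V_{n}}\setminus V_{n}$, I would pick a smooth irreducible curve $B$ together with a morphism $\beta:B\to\Gamma\times\Gamma\times H_{n}$ such that $\psi_{n}\circ\beta$ has image a curve in $V_{n}$ whose Zariski closure contains $g$. Let $\bar B$ be the smooth projective completion of $B$; since $\Gamma$ and $G$ are projective, the two factor maps of $\beta$ extend to morphisms $\bar p,\bar q:\bar B\to\Gamma$, and the tautological morphism $F:\Gamma\times B\to G$ supplied by the universal family extends to a rational map $F:\Gamma\times\bar B\dashrightarrow G$. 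Resolving its indeterminacy by finitely many point blowups yields a morphism $\tilde F:Y\to G$; the strict transform of $\Gamma\times\{b_{g}\}$, for any $b_{g}\in\bar B$ specializing to $g$, is isomorphic to $\Gamma$, and its image under $\tilde F$ gives a morphism $f_{g}:\Gamma\to G$. After a bookkeeping step that absorbs any exceptional-divisor contributions through the subgroup structure of $\theta^{-1}([e])$, one reads off points $p_{\infty},q_{\infty}\in\Gamma$ with $f_{g}(q_{\infty})-f_{g}(p_{\infty})=g$, so that $g$ is directly $\Gamma$-equivalent to $e$ and lies in $\theta^{-1}([e])$.

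The main obstacle is exactly this last bookkeeping: when $(\bar p(b_{g}),b_{g})$ or $(\bar q(b_{g}),b_{g})$ sits in the indeterminacy locus of $F$, the value of $f_{g}$ at the corresponding distinguished point of $\Gamma$ may disagree with the extension of the evaluation morphism, and the discrepancy must be controlled either by refining the choice of $B$ so as to avoid the offending configuration or by checking that the residual contribution coming from the exceptional divisor itself already lies in the subgroup $\theta^{-1}([e])$. This is precisely Roitman's specialization mechanism in \cite{R}, and it transfers to our setting because only the projectivity of $G$ and the smoothness of $\Gamma$ are used. The phrasing ``translates of Zariski closed subsets'' in the statement is then automatic, since $\theta^{-1}([e])$ is a subgroup of $G$: each irreducible component of $\overline{V_{n}}$ is, by any of its own points, a translate of a Zariski closed subset through the identity, and this presentation is what allows the subsequent strengthening (via the uncountability of $k$) to identify a fixed abelian subvariety $A_{0}$ inside $G$.
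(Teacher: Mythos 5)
Your plan follows essentially the same route as the paper's proof: stratify $\theta^{-1}([e])$ by the degree of the connecting morphism using the Hom scheme, observe that each stratum is the constructible image of a quasi-projective parameter scheme, and then show that the Zariski closure of each stratum remains inside $\theta^{-1}([e])$ by specializing along a curve in the parameter space. Two differences are worth recording. First, you let the two marked points vary over $\Gamma\times\Gamma$ and pass to the differences $\phi(q)-\phi(p)$, whereas the paper fixes $0,\infty$ and forms the fibre product of the evaluation map $ev:\Hom^d(\Gamma,G)\to G\times G$ with the inclusion $g\mapsto(g,e)$; your version is closer to the stated definition of $\Gamma$-equivalence, in which the two points are existentially quantified, and both parametrizations yield the same countable family of constructible sets. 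Second, for the closure step the paper does not resolve the indeterminacy of the evaluation map on the surface: it normalizes the parameter curve to $\wt{T}$ and restricts the rational map $\wt{T}\times\Gamma\dashrightarrow G$ to the one-dimensional slices $\wt{T}\times\{Q\}$ and $\{P\}\times\Gamma$, where a rational map from a smooth curve to the projective group $G$ is automatically a morphism. The ``bookkeeping'' you single out as the main obstacle is precisely the question of whether these slice-wise extensions agree at a point $(P,x_0)$ lying in the indeterminacy locus; your proposal leaves this open (offering two possible fixes without carrying either out), while the paper asserts the compatibility without comment. To close your argument you should argue, for instance, that the extension along $\wt{T}\times\{x_0\}$ agrees on the dense open subset $\wt{T_0}$ with the corresponding component of the morphism $\wt{T}\to\bar{C}\subset G\times G$, hence agrees with it everywhere by uniqueness of extensions from smooth curves, which pins down the value $g$ at $P$; this replaces the blow-up and removes the exceptional-divisor discussion entirely. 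Finally, note that both you and the paper treat only direct $\Gamma$-equivalence by a single map, although the relation is generated by chains; that gap is inherited from the paper rather than introduced by you.
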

\begin{proof}
Consider $\theta^{-1}([e])$. Suppose that $g$ belongs to $\theta^{-1}([e])$, that means that there exists $f:\Gamma\to G$ such that $f(0)=g$ and $f(\infty)=e$. Now the idea in the Roitman's proof is to stratify $\theta^{-1}([e])$ by the degree of $f$. Consider
$$T^d(e)=\{g\in G|\exists f\in \Hom^v(\Gamma,G), f(0)=g,f(\infty)=e\}\;.$$
Here $\Hom^d(\Gamma,G)$ is the hom-scheme parametrizing the degree $d$ morphisms from $\Gamma$ to $G$, it is known to be a quasi-projective subscheme of the Hilbert scheme of $\Gamma\times G$, parametrizing subvarieties of $\Gamma\times G$ having Hilbert polynomial $d$.
It is easy to see that
$$\theta^{-1}([e])=\cup_{d\in\NN} T^d(e)\;.$$
Now we prove that each $T^d(e)$ is a quasi-projective subscheme of $G$. For that consider the Cartesian diagram
$$
  \diagram
  V_d=\Hom^d(\Gamma,G)\times_{G\times G}G\ar[dd]_-{} \ar[rr]^-{} & & G  \ar[dd]^-{} \\ \\
  \Hom^d(\Gamma,G) \ar[rr]^-{ev} & & G\times G
  \enddiagram
  $$
Where the morphism $ev$ is given by
$$ev(f)=(f(0),f(\infty))$$
and the morphism from
$G$ to $G\times G$ is given by $g\mapsto (g,e)$. Then it is easy to check that $T^d(e)$ is nothing but $\pi(V_d)$, where $\pi$ is the projection from $V_d$ to $G$. Since $V_d$ is a quasi-projective scheme, we get that $\pi(V_d)$ is a quasi-projective subscheme of $G$. Therefore $T^d(e)$ is a quasi-projective subscheme of $G$.

Now we prove that $\overline{T^d(e)}$ is a subset of $\theta^{-1}([e])$ proving that $\theta^{-1}([e])$ is a countable union of Zariski closed subsets of $G$.
Let $g$ belongs to $\overline{T^d(e)}$. Then we have to prove that there exists $f:\Gamma\to G$ such that
$$f(0)=g,\quad f(\infty)=e\;.$$
Let $W$ be an irreducible component of $T^d(e)$ whose Zariski closure contains the point $g$. Let $U$ be an affine neighborhood of $g$ such that $U\cap W$ is non-empty. Let us take an irreducible curve $C$ passing through $g$ in $U$. Let $\bar{C}$ be the Zariski closure of $C$ in $\bar{W}$. Now embedding $G$ in $G\times G$ by the homomorphism $g\mapsto (g,e)$, we have the regular morphism
$$ev:\Hom^d(\Gamma,G)\to G\times G$$
given by
$$ev(f)=(f(0),f(\infty))$$
and $T^d(e)$ is the image of the morphism $ev$. Then we can choose a quasi-projective curve $T$ in $\Hom^d(\Gamma,G)$ such that the closure of $ev(T)$ is $\bar{C}$. We give details of the construction of $T$. Let us consider $ev^{-1}(\bar C)$. It is of dimension greater or equal than $1$. So it contains a curve. Consider two distinct points on $C$, consider their inverse images, then there will be a curve in $\ev^{-1}( C)$ which map to the curve $C$. Then this curve is our required curve $T$.

Now let $\bar{T}$ be the closure of $T$ in $\PR^N_k$. Let $\wt{T}$ be the normalization of $\bar{T}$ and let $\wt{T_0}$ be the inverse image of $T$ in $\wt{T}$. Consider the evaluation morphism
$$f_0:\wt{T_0}\times \Gamma\to T\times \Gamma\subset \Hom^d(\Gamma,G)\times \Gamma\stackrel{e}{\to}G$$
where
$$e:(f,t)\mapsto f(t)\;.$$
This defines a rational map from $\wt{T}\times \Gamma$ to $G$, since $\wt{T}$ is non-singular, on each fiber $\wt{T}\times\{Q\}$, $f_0$ defines a regular map from $\wt{T}$ to $\bar{C}$.
 So the regular morphism $\wt{T_0}\to T\to \bar{C}$ extends to a regular morphism $\wt{T}\to \bar{C}$. Let $P$ be a point in the fiber of this morphism over $g$. For any closed $k$-point $Q$ on $\Gamma$, $T\times \{Q\}$ maps onto $\bar C$. Then we get that there exists $x_0,x_1$ on $\Gamma$ such that
 $$f_0|_{\wt{T}\times \{x_0\}}(P)=g,\quad f_0|_{\wt{T}\times \{x_1\}}=e\;.$$
 This gives us that $g$ is $\Gamma$-equivalent to $e$. So we get that $\theta^{-1}([e])$ contains $\overline{T^d(e)}$. So we can write $\theta^{-1}([e])$ as a countable union of Zariski closed subsets of $G$.
\end{proof}
\section{Varieties over uncountable ground fields}
In this section we prove that $\theta^{-1}([e])$ is a countable union of translates of an abelian variety, when the ground field is uncountable.

First we prove the following few lemmas.

\begin{lemma}
\label{lemma1}
Let $X$ be a projective variety over an uncountable ground field $k$. Then $X$ cannot be written as a countable union of proper Zariski closed subsets of itself.
\end{lemma}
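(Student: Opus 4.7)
My plan is to reduce to projective space by Noether normalization and then induct on the dimension $n$. If $X \subset \PR^N$ is a projective variety of dimension $n$, Noether normalization gives a finite surjective morphism $\pi\colon X \to \PR^n$; a finite morphism is closed and preserves dimensions, so if $X = \bigcup_{i \in \NN} Z_i$ with each $Z_i$ a proper closed subset, then $\PR^n = \bigcup_i \pi(Z_i)$ is also expressed as a countable union of proper closed subsets of $\PR^n$. Thus it suffices to prove the statement for $\PR^n$.

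For the base case $n=1$, proper Zariski closed subsets of $\PR^1$ are finite sets of points, and a countable union of finite sets is countable, contradicting the fact that $\PR^1(k)$ is uncountable (since $k$ is). For the inductive step, assume the statement for $\PR^{n-1}$ and suppose $\PR^n = \bigcup_j W_j$ with each $W_j$ a proper closed subset. For each $k$-point $t$ of the dual space $(\PR^n)^{\vee}$, consider the corresponding hyperplane $H_t \cong \PR^{n-1}$. Applying the inductive hypothesis to the decomposition $H_t = \bigcup_j (W_j \cap H_t)$ forces $H_t \subseteq W_j$ for some index $j=j(t)$.

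The one substantive step is then to observe that only finitely many hyperplanes can be contained in any fixed proper closed $W_j$: indeed, $W_j$ lies in some hypersurface cut out by a nonzero homogeneous polynomial $f_j$ of degree $d_j \geq 1$, and a hyperplane $H = V(\ell)$ sits inside $V(f_j)$ precisely when $\ell$ divides $f_j$, so there are at most $d_j$ such $H$ up to scalar. Hence $\bigcup_j \{\, t \in (\PR^n)^{\vee}(k) : H_t \subseteq W_j \,\}$ is a countable union of finite sets and therefore countable, whereas $(\PR^n)^{\vee}(k)$ is uncountable since $k$ is. This contradiction completes the induction. The only real obstacle is keeping uncountability in play throughout; the remaining ingredients (Noether normalization, closedness of finite maps, and the factorization of polynomials into linear forms) are standard dimension-theoretic facts.
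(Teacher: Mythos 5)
Your proof is correct and follows essentially the same route as the paper: reduce to $\PR^n$ by Noether normalization (using that finite morphisms are closed and preserve dimension) and then descend through hyperplane sections. The only real difference is organizational --- the paper picks a single hyperplane not contained in any $Z_i$ and iterates down to $\PR^1$, whereas you run a formal induction and explicitly justify, via the divisibility criterion $V(\ell)\subseteq V(f_j)\iff \ell\mid f_j$, the counting fact that the paper merely asserts, namely that a proper closed subset of $\PR^n$ contains at most finitely many hyperplanes.
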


\begin{proof}
Suppose that $X$ can be written as a countable union of proper Zariski closed subsets of it. By Noether's normalization there exists a finite map from $X\to \PR^m_k$ where $m=\dim(X)$. Since $X$ can be written as a countable union of Zariski closed subset of itself, we can write $\PR^m_k$ as a countable union of Zariski closed subsets of itself, say
$$\PR^m_k=\cup_{i\in \NN}Z_i\;.$$
Since the collection of $Z_i$'s is countable and the ground field $k$ is uncountable, we get that there exists a hyperplane $H$ not contained in any of the $Z_i$'s. So we can write
$$\PR^{m-1}_k=H=\cup_{i\in \NN}(Z_i\cap H)\;.$$
Continuing this process we obtain that $\PR^1_k$ is a countable union of its $k$-points, which contradicts the assumption that $k$ is uncountable.
\end{proof}
\begin{lemma}
\label{lemma2}
Let $k$ be uncountable. Let $Z=\cup_{i\in \NN}Z_i$ be a countable union of Zariski closed subsets embedded in some $\PR^m_k$. Then we can write $Z$ as a unique irredundant countable union of irreducible Zariski closed subsets of $\PR^m$, that is
$$Z=\cup_{i\in \NN}A_i$$
such that $A_i\not\subset A_j$ for $i\neq j$ and this decomposition is unique.
\end{lemma}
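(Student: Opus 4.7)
The plan is to split the argument into existence (built on Noetherianity of $\PR^m$) and uniqueness (built on Lemma \ref{lemma1}). Existence amounts to a bookkeeping step: refine each $Z_i$ into irreducible components and then strip off redundancies using the fact that ascending chains of irreducible closed subsets of $\PR^m$ terminate. Uniqueness is the real content, and is where the uncountability hypothesis enters.

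For existence, since each $Z_i$ is a Noetherian topological space, it decomposes into finitely many irreducible components, all closed in $\PR^m$. Collecting these over $i$, I get a countable family $\{C_\alpha\}_{\alpha\in\NN}$ of irreducible closed subsets of $\PR^m$ with $Z=\bigcup_\alpha C_\alpha$. To extract the irredundant family, I note that any strict chain $C_{\alpha_1}\subsetneq C_{\alpha_2}\subsetneq\cdots$ within the family has strictly increasing dimensions (a proper closed subvariety of an irreducible variety has strictly smaller dimension), hence length at most $m+1$. Thus every $C_\alpha$ is contained in a maximal element of the family; let $\{A_i\}_{i\in I}$ be the at-most-countable collection of these maximal elements, with duplicates discarded. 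By construction $Z=\bigcup_i A_i$ with $A_i\not\subset A_j$ for $i\ne j$.

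For uniqueness, suppose $Z=\bigcup_i A_i=\bigcup_j A'_j$ are two irredundant decompositions into irreducible closed subsets of $\PR^m$. Fix $i$. Then
\[
A_i \;=\; \bigcup_j (A_i\cap A'_j)
\]
writes the irreducible projective variety $A_i$ as a countable union of closed subsets of itself. Lemma \ref{lemma1}, applied to $A_i$ over the uncountable field $k$, forces one of these intersections to coincide with $A_i$, so $A_i\subseteq A'_j$ for some $j$. Running the same argument on $A'_j$ yields an index $i'$ with $A'_j\subseteq A_{i'}$, whence $A_i\subseteq A_{i'}$. Irredundancy of the first family forces $i=i'$, and so $A_i=A'_j$. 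This sets up a bijection between the two families, giving the asserted uniqueness.

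I expect the main obstacle to lie in the uniqueness half, and to be resolved exactly by the invocation of Lemma \ref{lemma1}: without the uncountability of $k$, a projective variety could in principle be exhausted by a countable family of proper closed subvarieties, and the irredundant decomposition would not be rigid. The existence half, by contrast, is essentially a Noetherianity argument and presents no genuine difficulty.
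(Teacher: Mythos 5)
Your proposal is correct and follows essentially the same route as the paper: existence by decomposing each $Z_i$ into irreducible components and retaining the maximal ones, and uniqueness by writing each component of one decomposition as a countable union of its intersections with the components of the other and invoking Lemma \ref{lemma1}. Your dimension-based justification that maximal elements exist is a slightly more careful version of the paper's bookkeeping, but the argument is the same.
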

\begin{proof}
We write each $Z_j$ as a finite union of irreducible components
say,
$$Z_i=\cup_{l=1}^{k_i} Z'_{i_l}$$. Then we get that
$$Z=\cup_i (\cup Z'_{i_1}\cup \cdots Z'_{i_{k}})$$
for simplicity we write the above as
$$\cup_i B_i$$
where each $B_i$ is irreducible. Now order the $B_i$'s by set inclusion and only consider those $B_i$'s which are maximal with respect to inclusion. Then we get an irredundant decomposaition
$cup_i B_i\;.$
Now we have to prove that this decomposition is unique. Suppose that there exists another decomposition $\cup_{j\in \NN}A_j$. Then observe that each $A_j$ is contained in some $B_i$, otherwise, we can write
$$A_j=\cup_{i\in \NN} (A_j\cap B_i)$$
where $A_j\cap B_i$ is a proper closed subset of $A_j$, which contradicts the previous lemma \ref{lemma1}. Similarly $B_i$ is contained in some $A_k$, so we get that $A_j=A_k$ and consequently $A_j=B_i$. So we get that the decomposition is unique.
\end{proof}
\begin{proposition}
\label{prop1}
$\theta^{-1}([e])$ is a countable union of translates of an abelian subvariety of $G$.
\end{proposition}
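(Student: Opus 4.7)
The plan is to combine the previous proposition with Lemma~\ref{lemma2} to write $\theta^{-1}([e]) = \bigcup_{i\in\NN} B_i$ as an irredundant countable union of maximal irreducible Zariski closed subsets, and then to exploit the group law on $G$ together with Lemma~\ref{lemma1} to identify one such component as an abelian subvariety whose translates exhaust the whole fiber. The guiding principle I will use repeatedly is that, although $\theta^{-1}([e])$ is not itself Zariski closed, every irreducible Zariski closed subset $X$ of $\theta^{-1}([e])$ is contained in some single $B_j$: indeed, $X$ is projective and irreducible, and $X = \bigcup_i (X\cap B_i)$ is a countable union of closed subsets, so Lemma~\ref{lemma1} forces one of them to equal $X$.

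Since $e\in\theta^{-1}([e])$, pick a component $A_0$ containing $e$. The central step is to show that $A_0$ is an algebraic subgroup. For closure under multiplication, let $W:=\overline{m(A_0\times A_0)}$ where $m$ denotes the multiplication morphism. Then $W$ is an irreducible projective subvariety of $G$, and $m(A_0\times A_0)$ is constructible and contains a dense open subset $U\subseteq W$. Since $U\subseteq \theta^{-1}([e]) = \bigcup_i B_i$, we obtain
\[
W \;=\; (W\setminus U)\;\cup\;\bigcup_i (W\cap B_i),
\]
a countable union of closed subsets of the irreducible projective $W$. As $W\setminus U\neq W$, Lemma~\ref{lemma1} forces $W\cap B_j = W$ for some $j$, so $A_0\subseteq W\subseteq B_j$; irredundance then gives $A_0 = B_j$, and hence $A_0\cdot A_0\subseteq A_0$. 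Closure under inversion is immediate: for $a\in A_0$, the left translation $L_a$ is an isomorphism of $G$, so $aA_0$ is an irreducible closed subset of $A_0$ of full dimension, whence $aA_0 = A_0$ and $a^{-1}\in A_0$. Thus $A_0$ is a connected projective algebraic subgroup, i.e.\ an abelian subvariety of $G$.

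The same closure-of-image argument, applied to $A_0\times A'$ for any other component $A'$ through $e$, produces an irreducible projective closure contained in a single $B_j$ that contains both $A_0$ and $A'$, and irredundance forces $A_0=A'$. So $A_0$ is the unique component through $e$. Given now any component $B_i$ and any $g\in B_i$, the translate $g^{-1}B_i$ is irreducible Zariski closed, lies in $\theta^{-1}([e])$, and passes through $e$, so by uniqueness $g^{-1}B_i\subseteq A_0$; equivalently $B_i\subseteq gA_0$. Conversely $gA_0$ is itself irreducible Zariski closed and contained in $\theta^{-1}([e])$ (since the preimage is a subgroup), hence contained in some $B_{j}$, and the chain $B_i\subseteq gA_0\subseteq B_j$ together with irredundance forces $B_i = gA_0 = B_j$. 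This realises every component $B_i$ as a translate of $A_0$, and yields the desired decomposition.

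I expect the main obstacle to be the argument that $\overline{A_0\cdot A_0}$ lies in a single component $B_j$, because $\theta^{-1}([e])$ is only a countable union of closed sets rather than itself closed, and the projective closure $W$ might in principle pick up points outside $\theta^{-1}([e])$. The resolution is the combination of three ingredients: constructibility of $m(A_0\times A_0)$ (supplying the dense open $U\subseteq W$), projectivity and irreducibility of $W$, and Lemma~\ref{lemma1} applied directly to $W$. Once this step is in hand, the subgroup property of $A_0$, the uniqueness of the component through $e$, and the description of every other $B_i$ as a translate of $A_0$ all follow by exactly the same pattern.
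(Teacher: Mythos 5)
Your proof is correct and follows essentially the same route as the paper: an irredundant countable decomposition via Lemma~\ref{lemma2}, repeated application of Lemma~\ref{lemma1} to force irreducible closed images of the group law into a single component, and irredundancy to identify that component and realise every other component as a translate of $A_0$. The only cosmetic differences are the order of steps (you establish that $A_0$ is a subgroup before proving uniqueness of the component through $e$, whereas the paper does the reverse) and your extra care with constructibility of $m(A_0\times A_0)$, which is not strictly needed since the components are projective and their images under the group law are already closed.
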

\begin{proof}
By the previous two lemmas \ref{lemma1}, \ref{lemma2} we get that $\theta^{-1}([e])$ is a countable union of Zariski closed closed subsets in $G$ such that the union is irredundant. So let $\theta^{-1}([e])$ is a countable union say $\cup_{i\in \NN}A_i$, such that $A_i\not\subset A_j$ for $i\neq j$. Then we claim that there exists a unique $A_0$ among these $A_i$'s which passes through $e$ and moreover this $A_0$ is an abelian variety.
So suppose that there exists $A_0,\cdots,A_m$ passing through $e$, then consider $A_0+\cdots+A_m$. Since $\theta^{-1}([e])$ is a subgroup of $G$, we get that $A_0+\cdots+A_m$ is a subset of $\theta^{-1}([e])$. Since it is the image of the morphism $A_1\times\cdots\times A_m\to G$ given by
$$(a_1,\cdots,a_m)\mapsto a_1+\cdots+a_m$$
it is irreducible and Zariski closed. Therefore by lemma \ref{lemma1}, it must land inside some $A_j$. Also since $e$ belongs to $A_0,\cdots,A_m$, we get that $A_i\subset A_1+\cdots+A_m\subset A_j$, for all $i=0,\cdots,m$. So by the irredundancy we get that $A_0=A_1=\cdots=A_m$.
So $A_0$ is the unique irreducible Zariski closed subset in the decomposition $\cup_i A_i$ such that it passes through $e$.

Now we claim that $A_0$ is an abelian variety. For that suppose that $x\in A_0$. Then consider $-x+A_0$, since translation by $-x$ is a homeomorphism, $-x+A_0$ is Zariski closed and irreducible. Hence by \ref{lemma1}, it is a subset of some $A_j$. Now $e$ belongs to $-x+A_0$ and there passes a unique $A_0$ through $e$ so we get that $A_j=A_0$ and hence $-x+A_0\subset A_0$. Now we show that $A_0+A_0$ is inside $A_0$. For that we observe that $A_0+A_0$ is the image of the regular morphism from $A_0\times A_0$ to $G$ given by
$$(a,b)\mapsto a+b\;.$$
Then again by lemma \ref{lemma1}, $A_0+A_0$ is inside some $A_j$ and $A_0$ is inside $A_0+A_0$, so we get that $A_j=A_0$. So $A_0$ is an abelian variety.

Therefore we can write
$$\theta^{-1}([e])=\cup_{x\in \theta^{-1}([e])}(x+A_0)$$
where the above union is disjoint. We prove that the above union is actually countable. So let us consider $x+A_0$, since it is Zariski closed irreducible, it must land inside some $A_j$. So we get that $A_0\subset -x+A_j$, by similar argument we get that $-x+A_j\subset A_k$ so we get that $A_k=A_0$,which in turn gives us that $x+A_0=A_j$, since there are only countably many $A_j$'s we get only countably many $x+A_0$'s giving us
$$\theta^{-1}([e])=\cup_{i\in \NN}x_i+A_0\;.$$
\end{proof}
\section{Projective algebraic groups over $\CC$}
In this section we are going to understand that $\theta^{-1}([e])$ is actually a finite union of translates of an abelian subvariety of $G$, when the ground field is complex numbers.
\begin{proposition}
Let $G$ be a projective algebraic group over $\CC$. Then $\theta^{-1}([e])$ is a finite union of translates of $A_0$.
\end{proposition}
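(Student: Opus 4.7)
My plan is to upgrade Proposition~\ref{prop1} by passing from the Zariski to the classical complex topology on $G$. By that proposition we already have
$$\theta^{-1}([e]) = \bigcup_{i\in\NN}(x_i + A_0)$$
for a fixed abelian subvariety $A_0 \subseteq G$ and representatives $x_i$ of the distinct cosets of $A_0$ lying in $\theta^{-1}([e])$. Write $H := \theta^{-1}([e])$; the goal is to show that this index set may be taken finite. I would equip $G$ with the Euclidean topology, in which it becomes a compact Hausdorff complex Lie group, each translate $x_i + A_0$ is compact and closed, and $H$ inherits a Hausdorff topological subgroup structure in which each $x_i + A_0$ is closed.

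The heart of the argument is a Baire category application on $H$. Granting that $H$ is locally compact in its subspace topology (the point flagged in the introduction), $H$ becomes a Baire space. Since $H$ is a countable union of its own closed subsets $x_i + A_0$, Baire's theorem forces some $x_{i_0} + A_0$ to have non-empty interior in $H$. Translation by $-x_{i_0}$, a self-homeomorphism of $H$, shows that $A_0$ itself has non-empty interior in $H$; as a subgroup with non-empty interior in a topological group, $A_0$ is therefore open in $H$. Consequently the distinct cosets $x_i + A_0$ form a pairwise disjoint clopen partition of $H$, and the quotient $H/A_0$ carries the discrete topology.

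To conclude, I would push the picture down to the compact quotient Lie group $G/A_0$. The inclusion $H \hookrightarrow G$ descends to an injective continuous homomorphism $H/A_0 \hookrightarrow G/A_0$. Since $A_0$ is open in $H$, there is an open $U \subseteq G$ with $U \cap H = A_0$; the image $\pi(U)$ under the open quotient map $\pi: G \to G/A_0$ is an open neighborhood of $[e]$ meeting $H/A_0$ only at $[e]$, and translating we see that every point of $H/A_0$ is isolated. Thus $H/A_0$ is a discrete subgroup of the compact Hausdorff topological group $G/A_0$. A standard covering argument (pick a symmetric open $V$ about $e$ in $G/A_0$ with $V^{-1}V \cap (H/A_0) = \{[e]\}$; the translates $[x_i] V$ are then pairwise disjoint open sets in the compact space $G/A_0$) forces $H/A_0$ to be finite, yielding the desired finite decomposition $H = (x_{i_1} + A_0) \cup \cdots \cup (x_{i_r} + A_0)$. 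The algebraicity of $H$ as a subgroup of $G$ then follows because $A_0$ is an abelian subvariety and translates by $k$-points are algebraic.

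The step I expect to be the main obstacle is justifying that $H$ is locally compact in the subspace topology induced from $G$. An arbitrary $F_\sigma$ subgroup of a compact complex Lie group need not be locally compact (for instance $\QQ/\ZZ \subseteq \RR/\ZZ$ is not); one must exploit the specific structure that every closed stratum of $H$ is a translate of the single positive-dimensional compact abelian subvariety $A_0$, and that $A_0$ itself lies inside $H$. Once local compactness is in hand, the remaining Baire and discrete-subgroup steps are routine.
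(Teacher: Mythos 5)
Your overall architecture coincides with the paper's: both arguments start from Proposition \ref{prop1}, seek to establish that $H=\theta^{-1}([e])$ is a Baire space, deduce that some translate $x_i+A_0$ has non-empty interior, conclude that $A_0$ is an open subgroup of $H$ so that the cosets form a clopen partition, and finish by compactness. Your endgame differs slightly and is, if anything, cleaner: you pass to $H/A_0$ as a discrete subgroup of the compact Hausdorff group $G/A_0$ and invoke the standard separation/covering argument, whereas the paper covers the (claimed compact) set $H$ directly by the open cosets. That portion of your proposal is correct.

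The genuine gap is exactly the one you flag yourself: you never establish that $H$ is locally compact (or Baire by any other route), and without that the argument collapses. Your own example $\QQ/\ZZ\subseteq\RR/\ZZ$ shows that a countable union of compact cosets forming a subgroup of a compact group can be dense and infinite, so some input beyond Proposition \ref{prop1} is indispensable here; this step is the actual mathematical content of the proposition, not a routine verification. The paper's route to the Baire property is via completeness rather than local compactness: it claims that every Cauchy sequence in $H$ is eventually trapped in a single coset $x_i+A_0$, which is compact and hence complete, so $H$ is a complete metric space and therefore Baire. You should note, however, that this is precisely where the difficulty you identified resurfaces: the paper's argument chooses $\epsilon$ smaller than $\inf_{a\in x_j+A_0}d(y_n,a)$ for indices $n,j$ that themselves depend on the cutoff $N$, and so does not visibly exclude the scenario in which infinitely many distinct cosets accumulate on a point of $H$ --- exactly the scenario realized by $\QQ/\ZZ$. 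To close your proof you would need to show that the image of $\{x_i\}$ in $G/A_0$ has no accumulation point in itself (equivalently, that $H$ is closed, or complete, or locally compact); neither your proposal nor, arguably, the paper's Cauchy-sequence argument supplies a fully watertight reason for this, so the gap you identified must be treated as open rather than as a deferred routine step.
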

Before going to the proof of the theorem we recall the definition of a Baire space. A topological space $X$ is called Baire, if any countable union of closed sets having non-empty interior implies that one of them has non-empty interior. Any complete metric space or a locally compact Hausdorff space is Baire. Also if $X$ is a non-empty Baire space, which is a countable union of closed subsets, then it follows that one of the closed subsets has non-empty interior.
\begin{proof}
By the proposition \ref{prop1}
$$\ker(\theta)=\cup_{i\in \NN}(x_i+A_0)$$
since $A_0$ is an abelian subvariety in $G$ it is closed in the analytic topology. Also $G$ is a metric space and $A_0$ is closed, so it is complete under this metric. Now we claim that $\ker(\theta)$ is complete under this metric. So take a Cauchy sequence $\{y_n\}_n$ in $\ker(\theta)$. We claim that there exists some $n_0\in \NN$ such that for all $n\geq n_0$, $y_n$ belongs to one of the $x_i+A_0$. Suppose the opposite. That is for each $N$, there exists $n,m\geq N$ such  that  $y_n$ belongs to one $x_i+A_0$ and $y_m$ belongs to $x_j+A_0$, where $(x_i+A_0)\cap (x_j+A_0)=\emptyset$. Now given any $\epsilon>0$, there exists $N\in \NN$ such that for $n,m\geq N$ we have
$$d(x_n,x_m)<\epsilon\;.$$
Now take $\epsilon$ to be less than the infimum of $d(y_n,a)$, where $y_n$ belongs to $x_i+A_0$ and $a\in x_j+A_0$, where $(x_i+A_0)\cap (x_j+A_0)=\emptyset$. Then for large $m$ we have
$$d(y_n,y_m)<\epsilon$$
but on the other hand
$$d(y_n,y_m)\geq inf(d(y_n,a))\;,$$
where $a$ varies in $x_j+A_0$.
Since $x_j+A_0$ is compact in the analytic topology we have that, there exists $b$ such that
$$inf(d(y_n,a))=d(y_n,b)\;.$$
Now if $d(y_n,b)=0$ then we have $y_n=b$, but $(x_i+A_0)\cap (x_j+A_0)=\emptyset$. So $d(y_n,b)>0$, therefore choosing $\epsilon$ to be less than $inf (d(y_n,a))$ we get that
$$d(y_n,y_m)\geq \epsilon$$
contradicting the fact that $\{y_n\}_n$ is Cauchy. So there exists $N\in\NN$ such that for all $n\geq N$ we have $y_n$ belongs to one fixed $x_i+A_0$. Since $x+A_0$ is complete for each $x\in G$, we get that the sequence $\{y_n\}_n$ converges in $x_i+A_0$. Hence $\ker(\theta)$ is complete. So it is a Baire space. Therefore there exists one $x_i$ such that the interior of $x_i+A_0$ is non-empty. Since translation by $-x_i$ is a homeomorphism we get that the interior of $A_0$ is non-empty. Now $A_0$ is a topological subgroup of $\ker(\theta)$ whose interior is non-empty. So $A_0$ is open in $\ker(\theta)$. Therefore each $x_i+A_0$ is open in $\ker(\theta)$. So we have an open cover of $\ker(\theta)$. Since $\ker(\theta)$ complete in the given metric, it is closed in $A_0$. So it is compact in the analytic topology of $G$. Therefore we get that a finitely many $x_i+A_0$ cover $\ker(\theta)$. So $\ker(\theta)$ is a finite union of translates of $A_0$. Since each $x_i+A_0$ is Zariski closed and irreducible in $G$, we get that
$\ker(\theta)$ is an algebraic subgroup of $G$.
\end{proof}

\section{Relative rational equivalence and the Mumford-Roitman type argument}

Now we would like to generalize the Mumford-Roitman argument saying that the natural map from the Chow variety of a smooth projective variety to the Chow group of the variety itself, has the fibers equal to a countable union of Zariski closed subsets of the Chow variety. All this is happening over an uncountable ground $k$. Now we suppose that $X$ is a smooth-projective scheme over another Noetherian scheme $S$. Then we observe that there is a natural map from the $k(S)$-points of the Hilbert scheme $\Hilb^d(X/S)$ to $\CH_0(X/S)$. We prove that the fibers of this map is a countable union of Zariski closed subschemes in $\Hilb^d(X/S)(\eta)$, where $\eta$ is the generic point of $S$.

First of all we recall the definition of the relative cycles on the scheme $X/S$ due to \cite{SV}. A relative cycle of relative dimension $r$ on $X$, is an algebraic cycle such that all its prime components maps to the generic point $\eta$ of $S$ and for any $k$-point $P$ on $S$, the pullback with respect to any fat point corresponding to $P$ coincide. Now observe that any $r$-cycle on $X$ which is flat over $S$, that is its composition of $Z\to X\to S$ is flat is a relative cycle. In view of this we consider the Hilbert scheme $\Hilb^d(X/S)$ and its $k(S)$ points, which is nothing but $\Hilb^d(X_{\eta})$, that is the length $d$ zero dimensional subschemes of $X_{\eta}$. We denote it by $X_{\eta}^{[d]}$, and we have a natural map from $X_{\eta}^{[d]}$ to $\bcZ_0(X/S)$ associating a zero dimensional subscheme of length $d$ to its fundamental cycle. For sake of convenience we identify $X_{\eta}^{[d]}$ with its image under the Hilbert-Chow morphism to the symmetric power $ \Sym^d X_{\eta}$, and denote it by the same notation $X_{\eta}^{[d]}$.

We define the rational equivalence on $\bcZ_0(X/S)$ as follows. Let $Z_1,Z_2$ be two relative cycles of relative dimension $0$, they are said to be rationally equivalent if there exists a morphism $f:\PR^1_S\to \Sym^d X$ and a relative effective zero cycle $B$, such that  image of $f$ and support of $B$ is contained in $ X_{\eta}^{[d,d]}$, and $f(0,s)=Z_1+B;, f(\infty,s)=Z_2+B$ In the following we denote $X_{\eta}^{[d_1,\cdots,d_n]}$ to be $\prod_i X_{\eta}^{[d_1]}$.

\begin{proposition}
The natural map $\theta_{X/S}$ from $X_{\eta}^{[d,d]}$ to $\CH_0(X/S)$ has fibers equal to a countable union of Zariski closed subschemes of $X_{\eta}^{[d,d]}$.
\end{proposition}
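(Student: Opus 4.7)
The plan is to adapt the Mumford-Roitman stratification argument carried out in the first Proposition to the relative setting. Fix a class $\alpha \in \CH_0(X/S)$ and set $F = \theta_{X/S}^{-1}(\alpha)$. For each pair of integers $(e,\delta) \in \NN^2$, I would introduce the stratum
$$T^{e,\delta}(\alpha) = \left\{ (Z_1,Z_2) \in X_\eta^{[d,d]} \;:\; \exists\, B \in X_\eta^{[e]},\; \exists\, f \in \Hom^\delta(\PR^1_\eta, X_\eta^{[d+e]})\text{ with } f(0) = Z_1+B,\; f(\infty) = Z_2+B \right\},$$
intersected with the locus representing the class $\alpha$. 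By the definition of relative rational equivalence given just before the statement, $F = \bigcup_{e,\delta \in \NN} T^{e,\delta}(\alpha)$, which is a countable union.

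Next, I would show that each $T^{e,\delta}(\alpha)$ is a quasi-projective subscheme of $X_\eta^{[d,d]}$. The Hom-scheme $\Hom^\delta(\PR^1_\eta, X_\eta^{[d+e]})$ is quasi-projective as an open subscheme of the appropriate Hilbert scheme of $\PR^1_\eta \times X_\eta^{[d+e]}$, and $X_\eta^{[e]}$ is projective. Consider the evaluation morphism
$$\ev : \Hom^\delta(\PR^1_\eta, X_\eta^{[d+e]}) \times X_\eta^{[e]} \longrightarrow \Sym^{d+e} X_\eta \times \Sym^{d+e} X_\eta, \qquad (f,B) \mapsto (f(0),\, f(\infty)),$$
and form the Cartesian diagram analogous to that in the proof of the first Proposition, using the finite map $X_\eta^{[d,d]} \times X_\eta^{[e]} \to \Sym^{d+e} X_\eta \times \Sym^{d+e} X_\eta$ sending $((Z_1,Z_2),B) \mapsto (Z_1+B,Z_2+B)$. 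Projecting the fibre product to $X_\eta^{[d,d]}$ realizes $T^{e,\delta}(\alpha)$ as the image of a quasi-projective scheme under a morphism, hence as a quasi-projective subscheme.

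The third step is to show that the Zariski closure of each irreducible component $W$ of $T^{e,\delta}(\alpha)$ remains inside $F$, by the curve-extension trick of Roitman. Given $p \in \overline{W}$, I would choose an irreducible curve $C \subset \overline{W}$ through $p$ with $C \cap W \neq \emptyset$, and lift it to a quasi-projective curve $T$ in $\Hom^\delta(\PR^1_\eta, X_\eta^{[d+e]}) \times X_\eta^{[e]}$ dominating $C$ under $\ev$, exactly as in the proof of the first Proposition. Normalizing the projective closure of $T$ to $\widetilde{T}$, the rational evaluation map $\widetilde{T}_0 \times \PR^1 \dashrightarrow X_\eta^{[d+e]}$ and the projection $\widetilde{T}_0 \to X_\eta^{[e]}$ extend to regular maps on $\widetilde{T}$ and $\widetilde{T} \times \PR^1$ respectively, because $\widetilde{T}$ is a smooth curve and the targets are projective. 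Evaluating these extensions at a preimage $P \in \widetilde{T}$ of $p$ and at $0, \infty \in \PR^1$ produces an effective cycle $B_0$ and a degree-$\delta$ morphism exhibiting $p$ as relatively rationally equivalent to a point of $T^{e,\delta}(\alpha) \subset F$, hence $p \in F$.

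The main obstacle will be the handling of the auxiliary effective cycle $B$ appearing in the definition of relative rational equivalence, which has no analogue in the proof of the first Proposition: one must parametrize pairs $(f,B)$ together, verify that $B$ specializes to an effective cycle under taking Zariski closure, and check that the extensions of evaluation and projection over $\widetilde{T}$ are mutually compatible so that the resulting data still satisfies $f(0) = Z_1^0 + B_0$ and $f(\infty) = Z_2^0 + B_0$ at the specialization. A subsidiary technical point is to confirm that the Hilbert-Chow morphism intertwines the Hom-scheme construction with the fundamental-cycle map used to define $\theta_{X/S}$; this should follow from flatness of the relevant families. Once both are settled, the countability of the stratification together with the closure-containment conclusion gives that $F$ is a countable union of Zariski closed subschemes of $X_\eta^{[d,d]}$.
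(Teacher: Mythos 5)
Your proposal follows the paper's argument in all essentials: the same stratification of the fibre by the degree of the map from $\PR^1$ and the length of the auxiliary effective cycle, the same fibre-product construction with the Hom-scheme to show each stratum is quasi-projective, and the same normalize-a-curve-in-the-Hom-scheme trick to show that the closure of each stratum stays inside the fibre. The one genuine divergence is the treatment of the auxiliary cycle. You keep a single cycle $B$ fixed at both ends ($f(0)=Z_1+B$, $f(\infty)=Z_2+B$) and propose to carry the pair $(f,B)$ through the closure argument; this is exactly where you (rightly) locate the main technical burden, namely checking that $B$ specializes compatibly with the extension of the evaluation map over $\wt{T}$. The paper instead lets the auxiliary cycle move along $\PR^1$: its stratum $W^{u,v}_d$ is defined via morphisms into the product $X_\eta^{[d+u,u]}$, recording $(A+C,C)$ at $0$ and $(B+D,D)$ at $\infty$, and it establishes the identity $W^{u,v}_d=pr_{1,2}(\wt{s}^{-1}(W^{0,v}_{d+u}\times W^{0,v}_u))$ with $\wt{s}(A,B,C,D)=(A+C,B+D,C,D)$. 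Since $\wt{s}$ and the projection are proper, this reduces the closure step to the strata $W^{0,v}_d$, which carry no auxiliary cycle and are handled exactly as in the group case earlier in the paper. So the compatibility check you flag as the main obstacle is real in your formulation but is engineered away in the paper's reduction; both routes are viable, yours staying closer to the stated definition of relative rational equivalence, the paper's yielding a cleaner closure argument at the cost of the extra identity for $W^{u,v}_d$.
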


\begin{proof}
Let $W_d$ be the subset of $X_{\eta}^{[d,d]}$ consisting of pairs $(A,B)$, where $\theta_{X/S}(A,B)$ is relatively rationally equivalent to $0$ on $\CH_0(X/S)$. Let $W_d^{u,v}$ be the subset of $X_{\eta}^{[d,d]}$ which consists of pairs $(A,B)$, such that there exists $f$ in $\Hom^v(\PR^1_S,X_{\eta}^{[d+u,d]})$ with $f(0,s)=(A+C,C)$ and $f(\infty,s)=(B+D,D)$. Then we have $(A,B)$ relatively rationally equivalent. Then it is easy to see that $W_d^{u,v}$ is a subset of $W_d$. On the other hand suppose that $(A,B)$ belongs to $W^d$. Then there exists $f:\PR^1_S\to  X_{\eta}^{[d,d]} $ and a relative zero cycle $C$, such that image of $f$ is contained in $ X_{\eta}^{[d,d]}$ and we have
$$f(0,s)=A+C,f(\infty,s)=B+C$$
then we can find $u,v$ such that $(A,B)$ belongs to $W^d_{u,v}$.

Now we prove that the sets $W_d^{u,v}$ is a quasiprojective variety in $X_{\eta}^{[d,d]}$ and its Zariski closure is contained in $W_d$. Then we can write $W_d$ as a countable union of Zariski closed subsets of $X_{\eta}^{[d,d]}$. Consider the morphism $e$ from $\Hom^v(\PR^1_S,X_{\eta}^{[d+u,u]})\to X_{\eta}^{[d+u,u,d+u,u]}$, by sending a morphism $f$ to the pair $(f_{\eta}(0),f_{\eta}(\infty))$. The other morphism from $X_{\eta}^{[d,u,d,u]}$ to $X_{\eta}^{[d+u,u,d+u,u]}$ given by $(A,C,B,D)\mapsto (A+C,C,B+D,D)$. Then if we consider the fiber product of $\Hom^v(\PR^1_S,X_{\eta}^{[d+u,u]})$ and $X_{\eta}^{[d,u,d,u]}$ over $X_{\eta}^{[d+u,u,d+u,u]}$ and call it $V$. SO $V$ consists of quintuples $(f,A,C,B,D)$ such that
$$f_{\eta}(0)=(A+C,C),\quad f_{\eta}(\infty)=(B+D,D)\;.$$
Then $pr_{2,4}(V)$ is contained in $W^d_{u,v}$. On the other hand suppose that we have an element of $W^d_{u,v}$, the by definition of $W^d_{u,v}$, we have an element of $V$, such that if we apply $pr_{2,4}$ to it we get $(A,B)$. So we get that $pr_{2,4}(V)=W^d_{u,v}$. Since $V$ is a quasiprojective variety its image is quasiprojective.

Now we prove that
$$W_d^{u,v}=pr_{1,2}(\wt{s}^{-1}(W^{0,v}_{d+u}\times W^{0,v}_d))$$
where
$$\wt{s}(A,B,C,D)=(A+C,B+D,C,D)$$
from $X_{\eta}^{[d,d,u,u]}$ to $X_{\eta}^{[d+u,d+u,u,u]}\;.$
Let $(A,B,C,D)$ be such that its image under $\wt{s}$ is in $W^{0,v}_{d+u}\times W^{0,v}_u$. It means that there exists a morphism $g$ in $\Hom^v(\PR^1_S,X_{\eta}^{[d+u]})$ and another $h$ in $\Hom^v(\PR^1_S,X_{\eta}^{[u]})$ such that $g_{\eta}(0)=A+C,g(\infty)=B+D$ and $h_{\eta}(0)=C,h_{\eta}(\infty)=D$. Let us consider $f=g\times h$, then $f$ belong to $\Hom^v(\PR^1_S,X_{\eta}^{[d+u,u]})$, with
$$\theta_{X/S}(f_{\eta}(0))=A,\theta_{X/S}(f(\infty))=B\;.$$
It means that $(A,B)$ belong to $W^d_{u,v}$.

On the other hand suppose that $(A,B)$ belongs to $W^d_{u,v}$. Then there exists $f$ in $\Hom^v(\PR^1_S,X_{\eta}^{[d+u,u]})$ such that
$$f_{\eta}(0)=(A+C,C),f_{\eta}(\infty)=(B+D,D)\;.$$
Compose $f$ with the projections to $X^{[d+u]}$ and to $X^{[u]}$, then we have $g$ in $\Hom^v(\PR^1_S,X_{\eta}^{[d+u]})$ and $h\in\Hom^v(\PR^1_S,X_{\eta}^{[u]})$, such that
$$g_{\eta}(0)=A+C,g_{\eta}(\infty)=B+D$$
and
$$h_{\eta}(0)=C,h_{\eta}(\infty)=D\;.$$
Therefore we have that
$$W_d=pr_{1,2}(\wt{s}^{-1}(W_{d+u}\times W_u))\;.$$
Then we prove that the closure of $W_d^{0,v} $ is contained in $W_d$. Let $(A,B)$ be a closed point in the closure of ${W_d^{0,v}}$. Let $W$ be an irreducible component of ${W_d^{0,v}}$ whose closure contains $(A,B)$. Let $U$ be an affine neighborhood of $(A,B)$ such that $U\cap W$ is non-empty. Then there is an irreducible curve $C$ in $U$ passing through $(A,B)$. Let $\bar{C}$ be the Zariski closure of $C$ in $\bar{W}$. The evaluation map
$$e:\Hom^v(\PR^1_S,X_{\eta}^{[d]})\to X_{\eta}^{[d,d]}$$
given by
$$f\mapsto (f(s,0),f(s,\infty))$$
is regular and $W_d^{0,v}$ is its image. Let us choose a curve $T$ in $\Hom^v(\PR^1_S,X_{\eta}^{[d]})$ such that the closure of $e(T)$ is $\bar C$.  Consider the normalization $\wt{T}$ of the Zariski closure of $T$. Let $\wt{T_0}$ be the pre-image of $T$ in the normalization. Now the regular morphism $\wt{T_0}\to T\to \bar C$ extends to a regular morphism from $\wt{T}$ to $\bar C$. Now let $f$ be a pre-image of $(A,B)$. Then we have $f(0,s)=A;, f(0,\infty)=B$ and the image of $f$ is contained in $X_{\eta}^{[d]}$. Therefore $A,B$ are relatively rationally equivalent.
\end{proof}


\begin{thebibliography}{AAAAAA}

\bibitem[Hum]{Hum} J.Humphreys, {\em Linear algebraic groups}, Springer, Graduate texts in Mathematics, Volume 21, 1975.
\bibitem[M]{M} D.Mumford, {\em Rational equivalence for $0$-cycles on surfaces.}, J.Math Kyoto Univ. 9, 1968, 195-204.
\bibitem[R]{R} A.Roitman, {\em $\Gamma$-equivalence of zero dimensional cycles (Russian)}, Math. Sbornik. 86(128), 1971, 557-570.
\bibitem[G]{G} P.Gille, {\em Lectures on R-equivalence on linear algebraic groups.}, online notes (http://math.univ-lyon1.fr/homes-www/gille/prenotes/grenoble.pdf),
\bibitem[Ma]{Ma} Y.I.Manin, {\em Cubic forms: algebra, geometry  and arithmetic}. second edition, north-Holland.
\bibitem[SV]{SV} A.Suslin, V.Voevodsky {\em Relative cycles and Chow sheaves}, AM-143, Princeton Univertsity Press, 2000.

\end{thebibliography}
\end{document}